\theoremstyle{plain}
\newtheorem{thm}{Theorem}[section]
\newtheorem{mthm}{Theorem}
\newtheorem{lmm}[thm]{Lemma}
\newtheorem{mlmm}[mthm]{Lemma}
\newtheorem{mconj}[mthm]{Conjecture}
\theoremstyle{remark}
\newtheorem{mrmk}[mthm]{Remark}
\newcommand {\tb}{\textbf}
\newcommand {\mb}{\mathbb}
\newcommand {\Z}{\mb Z}
\newcommand {\R}{\mb R}
\newcommand {\ex}{\mathrm{excess}}
\begin{document}

\title{The Hurewicz image of the $\eta_i$ family,
       a polynomial subalgebra of $H_*\Omega_0^{2^{i+1}-8+k}S^{2^i-2}$}

\author{Peter J. Eccles\footnote{\textit{peter.eccles@manchester.ac.uk}}, Hadi Zare\footnote{\textit{hzare@maths.manchester.ac.uk}}}
\date{}

\maketitle

\abstract{We consider the problem of calculating the Hurewicz image
of Mahowald's family $\eta_i\in{_2\pi_{2^i}^S}$. This allows us to
identify specific spherical classes in
$H_*\Omega_0^{2^{i+1}-8+k}S^{2^i-2}$ for $0\leqslant k\leqslant 6$.
We then identify the type of the subalgebras that these classes give rise to,
and calculate the $A$-module and $R$-module structure of these
subalgebras. We shall the discuss the relation of these calculations
to the Curtis conjecture on spherical classes in $H_*Q_0S^0$, and
relations with spherical classes in $H_*Q_0S^{-n}$.}

\tableofcontents

\section{Introduction and statement of results}

Our aim here is to detect Mahowald's $\eta_i$ using the Hurewicz
homomorphism. The $\eta_i\in{_2\pi_{2^i}^S}$ family of Mahowald was
constructed in \cite[Theorem 2]{1} as a stable composite
$$\xymatrix{S^{2^i}\ar[r]^{f_i} & X_i\ar[r]^-{g_i} & S^0}$$
with $X_i=D_{2^i-3}(\R^2,S^7)$, $i\geqslant 3$, chosen to be one of pieces in the
Snaith splitting for $\Omega^2S^9$ \cite{99}. Regardless the
construction of the complex $X_i$, it has the property that it is
highly connected such that the mapping $f_i$ can be assumed a genuine
map. Moreover, the complex $X_i$ has its top cell in a dimension
less than $2^i$ which means that the mapping $f_i$ is trivial in
homology. The mapping $g_i$ is clearly a stable mapping, and can be
realised as a genuine map after finitely many suspensions. These
together implies that the stable adjoint of the $\eta_i$ family then
may be reaslied as a mapping
$$\xymatrix{S^{2^i}\ar[r] & X_i\ar[r] & Q_0S^0}$$
where the component $f_i$ is trivial in homology. This implies that
the above composite is trivial in homology, i.e. the mapping
$\eta_i$ maps trivially under the Hurewicz homomorphism
$$h:{_2\pi_{2^i}}Q_0S^0\to H_{2^i}Q_0S^0$$
where $H_*$ denotes, and will denote, $H_*(-;\Z/2)$. Despite the
above result, one might hope that if keep desuspending the mapping
$\eta_i$, we may be able to detect it using the Hurewicz
homomorphism. This is of course is a natural thing to expect. Our
main results reads as following.\\
\textbf{Main Theorem.} \textit{Let $\eta_i\in{_2\pi_{2^i}^S}$ denote
Mahowald's family. This class is detected by the Hurewicz
homomorphism
$$h:{_2\pi_6}Q_0S^{-2^i+6}\to H_6Q_0S^{-2^i+6}.$$
The spherical class $[\eta_i]_6=h\eta_i$ has the following property.
Let $j_2^\infty:QS^{2^i-3}\to Q\Sigma^{2^i-3}P_{2^i-3}$ be the
second stable James-Hopf invariant. We then have
$$(\Omega^{2^{i+1}-9}j_2^\infty)_*[\eta_i]_6=(\Sigma^{-2^i+6}a_{2^i-3})^2\neq 0$$
where $\Sigma^{-2^i+6}a_{2^i-3}\in H_3Q_0\Sigma^{-2^i+6}P_{2^i-3}$
is the class given by the inclusion of the bottom cell $S^3\to
Q_0\Sigma^{-2^i+6}P_{2^i-3}$.}\\

We note that the space $Q_0S^{-2^i+6}$ is an infinite loop space and
it is natural to think of the subalgebra of $H_*Q_0S^{-2^i+6}$
generated by the classes of the form $Q^I[\eta_i]_6$. This problems
becomes easier to answer when we consider the unstable case and
replace infinite loop spaces with finite loop spaces. First we have
the following observation which an unstable version of our main
theorem.

\begin{mthm}
Let $\eta_i\in{_2\pi_{2^i}^S}$ denote Mahowald's family. This class
is detected by the Hurewicz homomorphism
$$h:{_2\pi_6}\Omega_0^{2^{i+1}-8}S^{2^i-2}\to H_6\Omega_0^{2^{i+1}-8}S^{2^i-2}.$$
The spherical class
$[\eta_i]_6=h\eta_i$ has the following property. Let $j_2:\Omega
S^{2^i-2}\to QS^{2^{i+1}-6}$ be the second James-Hopf invariant. We
then have
$$(\Omega^{2^{i+1}-9}j_2)_*[\eta_i]_6=g_3^2.$$
\end{mthm}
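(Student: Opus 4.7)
The plan is to realise $\eta_i$ unstably in $\pi_6\Omega_0^{2^{i+1}-8}S^{2^i-2}$ and then to evaluate $(\Omega^{2^{i+1}-9}j_2)_*[\eta_i]_6$ by reducing it to a computation of the second stable James--Hopf invariant of $\Sigma^{2^i-2}\eta_i$.

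For the desuspension, I would start from Mahowald's factorisation $\eta_i=g_i\circ f_i$, using that $f_i:S^{2^i}\to X_i$ is already unstable. Exploiting the Snaith structure of $X_i=D_{2^i-3}(\R^2,S^7)$ as a summand of $\Omega^2S^9$ (rather than naively suspending $g_i$, which a metastable-range count shows need not suffice), one produces an unstable realisation of $\Sigma^{2^i-2}\eta_i$ as a genuine map $S^{2^{i+1}-2}\to S^{2^i-2}$, whose adjoint $\bar\eta_i\in\pi_6\Omega_0^{2^{i+1}-8}S^{2^i-2}$ has Hurewicz image $[\eta_i]_6$.

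Next, by naturality of the Hurewicz homomorphism,
$$(\Omega^{2^{i+1}-9}j_2)_*[\eta_i]_6 \;=\; h\bigl((\Omega^{2^{i+1}-9}j_2)\circ\bar\eta_i\bigr) \;\in\; H_6Q_0S^3,$$
so the task reduces to identifying the composite $(\Omega^{2^{i+1}-9}j_2)\circ\bar\eta_i$ as a homotopy class in $\pi_6Q_0S^3=\pi_3^S$. Taking full adjoints, this class corresponds to the stable composite $j_2\circ\hat\eta_i:S^{2^{i+1}-3}\to\Omega S^{2^i-2}\to QS^{2^{i+1}-6}$ where $\hat\eta_i$ is once-adjoined from $\Sigma^{2^i-2}\eta_i$. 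Under the Snaith splitting $\Sigma^\infty\Omega S^{2^i-2}\simeq\bigvee_k S^{k(2^i-3)}$ the map $j_2$ is precisely projection onto the $k=2$ summand $S^{2^{i+1}-6}$, so $j_2\circ\hat\eta_i$ is the second stable Hopf invariant $H_2(\Sigma^{2^i-2}\eta_i)\in\pi_{2^{i+1}-3}^S S^{2^{i+1}-6}=\pi_3^S$.

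Finally, I would identify this Hopf invariant with $\nu$ modulo the kernel of Hurewicz, exploiting the extended-power nature of $X_i$: the fact that $\eta_i$ is manufactured from the Snaith piece $D_{2^i-3}(\R^2,S^7)$ makes its construction intrinsically quadratic, and tracing this quadratic structure through the James filtration of $\Omega S^{2^i-2}$ yields the required class. Since $h(\nu)=Q^3g_3=g_3^2$ in $H_6Q_0S^3$, the theorem follows. The main obstacle I expect is this last identification --- relating Mahowald's extended-power construction to the second James--Hopf invariant and pinning down the resulting $\pi_3^S$ class as $\nu$ modulo Hurewicz-trivial elements --- whereas the desuspension and the reduction via Snaith are essentially formal consequences of adjunction and the Snaith splitting.
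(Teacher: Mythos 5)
Your overall route coincides with the paper's: desuspend $\eta_i$ to a genuine map $S^{2^{i+1}-2}\to S^{2^i-2}$, observe that composing its adjoint with $\Omega^{2^{i+1}-9}j_2$ produces a class in $\pi_6QS^3\cong{_2\pi_3^S}$ whose Hurewicz image is what must be computed, identify that class with $\nu$, and finish with $h(\nu)=Q^3g_3=g_3^2$ and naturality of the Hurewicz homomorphism. The only real divergence is bookkeeping: the paper obtains $h(\nu)=g_3^2$ from a quoted lemma (a class in ${_2\pi_{2m}}QX$ has Hurewicz image $x_m^2$ exactly when its stable adjoint is detected by $Sq^{m+1}$ on $x_m$ in the mapping cone), applied with $m=3$ to the $Sq^4$-detection of $\nu$, whereas you quote the Hurewicz image of $\nu$ directly; these are the same fact.

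The gap is concentrated in the two inputs you propose to manufacture yourself. That $\eta_i$ pulls back to ${_2\pi_{2^{i+1}-2}}S^{2^i-2}$ and that its second James--Hopf invariant is $\nu$ are precisely the content of Mahowald's theorem constructing the family, and the paper simply cites Mahowald for both. Your plan to re-derive them --- ``exploiting the extended-power nature of $X_i$'' and ``tracing this quadratic structure through the James filtration'' --- is not an argument; you have correctly located the hard point but offered only the hope that it works out, and the ``modulo the kernel of Hurewicz'' hedge does not rescue it, since you would still need to show the resulting class is an odd multiple of $\nu$. The fix is to cite Mahowald rather than reprove him; with that substitution, your reduction via adjunction, the Snaith splitting, and naturality of $h$ is sound and matches the paper's argument.
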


We may apply this result to detect some subalgebras living in
$H_*\Omega_0^{2^{i+1}-8}S^{2^i-2}$ and determine their algebraic
structure. In fact we are able to detect polynomial subalgebras in
$H_*\Omega_0^{2^{i+1}-8+k}S^{2^i-2}$ for $k=0,1,2,3$. Notice that
the space $\Omega_0^{2^{i+1}-8+k}S^{2^i-2}$ is a
$(2^{i+1}-8+k)$-loop space, and admits operations \cite[Part III,
Theorem 1.1]{2}
$$Q_a:H_*\Omega_0^{2^{i+1}-8+k}S^{2^i-2}\to H_{a+2*}\Omega_0^{2^{i+1}-8+k}S^{2^i-2}$$
for $a<(2^{i+1}-8)-1$. Hence, when $k=0$, we may consider to the
subalgebra of $H_*\Omega_0^{2^{i+1}-8}S^{2^i-2}$ generated by the
classes $Q_I[\eta_i]_6$ where $I=(i_1,\ldots,i_r)$ is any sequence
with $0<i_1\leqslant i_2\leqslant\cdots\leqslant i_r<2^{i+1}-9$.\\
In fact we can do more. First, notice that realising $\eta_i$ as an
element in ${_2\pi_0Q_0S^{-2^i}}$ we know that this maps
nontrivially under the Hurewicz homomorphism
$$h:{_2\pi_0QS^{-2^i}}\to H_0QS^{-2^i}.$$
Let $[\eta_i]=h\eta_i=(\eta_i)_*1$ where $1\in\overline{H}_0S^0$ is
the generator. One then may hope that this class may survive under
the homology suspension finitely many times. Second, consider the
Hurewicz homomorphism
$$h:{_2\pi_j}\Omega_0^{2^{i+1}-8+(6-j)}S^{2^i-2}\to H_j\Omega_0^{2^{i+1}-8+(6-j)}S^{2^i-2},$$
where $0\leqslant j\leqslant 6$, and let
$$[\eta_i]_j=h(\eta_i).$$
This then implies that
$$\sigma_*[\eta_i]_j=[\eta_i]_{j+1}.$$
Note that the classes $[\eta_i]_j$ are $A$-annihilated and primitive
as they are spherical. Observe that according to Theorem 1, we have
$[\eta_i]_6\neq 0$. This implies that $[\eta_i]_j\neq 0$ for $j<6$.
In particular, we have
$$[\eta_i]_5\in
H_5\Omega^{2^{i+1}-7}_0S^{2^i-2},$$
$$[\eta_i]_4\in H_4\Omega^{2^{i+1}-6}_0S^{2^i-2},$$
$$[\eta_i]_3\in H_*\Omega^{2^{i+1}-5}_0S^{2^i-2}.$$
Hence, we may consider to the subalgebra spanned by the classes of
the form $Q_I[\eta_i]_5$, $Q_I[\eta_i]_4$ and $Q_I[\eta_i]_3$ living
inside the correcponding algebras. Notice that we still don't know
the structure of this algebras, nor even if the classes
$Q_I[\eta_i]_j$ are nontrivial. Recall that having a $d$-dimensional
class $\xi$ we have $Q^{i+d}\xi=Q_i\xi$. We state our next theorems
using the operations $Q^i$ and their iterations. Our next result
gives partial information on these subalgebras.

\begin{mthm}
The homology algebra $H_*\Omega_0^{2^{i+1}-8}S^{2^i-2}$ contains a
primitively generated polynomial subalgebra given by
$$\Z/2[Q^I[\eta_i]_6:I\in\mathcal{I}_6, \ex(I)>6, i_r<2^{i+1}-3]$$
where $I=(i_1,\ldots,i_r)\in\mathcal{I}_6$ if and only if it is
admissible and all of its entries are even numbers. The action of
the Steenrod algebra on this subalgebra is determined by the Nishida
relations. Moreover, let the ideal $\underline{\mathfrak{a}}_6$ in
$H_*\Omega_0^{2^{i+1}-8}S^{2^i-2}$ be given by
$$\mathfrak{\underline{a}}_6=\langle Q^I[\eta_i]_6:\ex(I)>6, I\not\in\mathcal{I}_6\rangle.$$
Then this ideal belong to the kernel of $(\Omega^{2^{i+1}-9}j_2)_*$
where $j_2:\Omega S^{2^i-2}\to QS^{2^{i+1}-6}$ is the second
James-Hopf invariant.
\end{mthm}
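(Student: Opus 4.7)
The plan is to detect the claimed subalgebra through the pushforward $\pi_{*} = (\Omega^{2^{i+1}-9}j_2)_{*}$ into the homology of $Q_0 S^3$, whose structure is fully understood: it is the polynomial algebra
$$H_{*}Q_0 S^3 = \Z/2[Q^J g_3 : J \text{ admissible}, \ex(J) > 3].$$
Theorem 1 already gives $\pi_{*}[\eta_i]_6 = g_3^2$, and the condition $i_r < 2^{i+1}-3$ is precisely the range within which $Q^I$ is defined on $[\eta_i]_6$ in the source $(2^{i+1}-8)$-fold loop space. Since $\pi$ is a $(2^{i+1}-9)$-fold loop map, $\pi_{*}$ commutes with the Dyer--Lashof operations controlled by that loop structure, so within this range I get
$$\pi_{*}(Q^I [\eta_i]_6) = Q^I(g_3^2).$$

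The core computation, performed entirely inside the infinite loop space $Q_0 S^3$, rests on the Cartan formula $Q^s(xy) = \sum_{a+b=s}Q^a x\cdot Q^b y$. When $x = y$, the pairs $(a,b)$ and $(b,a)$ with $a\neq b$ cancel mod $2$, leaving $Q^s(x^2) = (Q^{s/2}x)^2$ for $s$ even and $0$ for $s$ odd. Iterating on $g_3^2$ yields
$$Q^I(g_3^2) = \begin{cases} (Q^{I/2} g_3)^2 & \text{all } i_j \text{ even,}\\ 0 & \text{otherwise,}\end{cases}$$
where $I/2 := (i_1/2,\ldots,i_r/2)$. Admissibility of $I$ is equivalent to admissibility of $I/2$, and $\ex(I/2) = \ex(I)/2$; so for $I\in\mathcal{I}_6$ with $\ex(I) > 6$ the class $Q^{I/2}g_3$ is a genuine polynomial generator of $H_{*}Q_0 S^3$. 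As $I$ varies these give distinct generators, and their squares are algebraically independent in the polynomial algebra. Pulling back through the ring homomorphism $\pi_{*}$ forces the $Q^I[\eta_i]_6$ themselves to be algebraically independent, establishing the polynomial subalgebra.

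The remaining claims then fall out. Primitivity of $[\eta_i]_6$ is automatic from sphericalness, and primitives in a primitively generated Hopf algebra are closed under the $Q^I$, so each $Q^I[\eta_i]_6$ is primitive and the subalgebra is primitively generated. Since $[\eta_i]_6$ is $A$-annihilated, iterating the Nishida relations on $Sq^r_{*} Q^I[\eta_i]_6$ kills every occurrence of $Sq^s_{*}[\eta_i]_6$ with $s>0$ and leaves a $\Z/2$-linear combination of classes $Q^{I'}[\eta_i]_6$, so the Steenrod action is entirely determined by the Nishida relations. The ideal statement is then immediate: each generator $Q^I[\eta_i]_6$ of $\underline{\mathfrak{a}}_6$ has some odd entry in $I$, so the second case of the Cartan formula gives $\pi_{*}(Q^I[\eta_i]_6) = Q^I(g_3^2) = 0$; since $\pi_{*}$ is a ring map, the entire ideal lies in $\ker(\Omega^{2^{i+1}-9}j_2)_{*}$. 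The main technical obstacle is the commutation step: the range $i_r < 2^{i+1}-3$ sits right at the edge of what the $(2^{i+1}-9)$-fold loop structure of $\pi$ controls, so the boundary index must be handled carefully and may require an auxiliary observation about the James--Hopf map $j_2$ in order to cover the extremal case.
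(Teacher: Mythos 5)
Your proposal is correct and follows essentially the same route as the paper: push forward along $(\Omega^{2^{i+1}-9}j_2)_*$ into the polynomial algebra $H_*QS^3$, use Theorem 1's identity $(\Omega^{2^{i+1}-9}j_2)_*[\eta_i]_6=g_3^2$ together with the Cartan formula $Q^s(x^2)=(Q^{s/2}x)^2$ to see that exactly the all-even admissible $I$ survive as squares of distinct generators, and deduce algebraic independence, primitivity, the Nishida-relation claim, and the kernel statement for $\underline{\mathfrak{a}}_6$ exactly as the paper does. The only cosmetic difference is that you work throughout with upper-indexed operations while the paper argues with the lower-indexed $Q_a$ and converts at the end; your remark about the boundary index $i_r<2^{i+1}-3$ is a fair caution, as the paper's own bookkeeping there contains an off-by-one slip that it does not address.
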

In other cases, we have a similar statement.
\begin{mthm}
Let $k=1,2,3$. Then the homology algebra
$H_*\Omega_0^{2^{i+1}-8+k}S^{2^i-2}$ contains a primitively
generated polynomial subalgebra given by
$$\Z/2[Q^I[\eta_i]_{6-k}:I\in\mathcal{I}_{6-k}, \ex(I)>6-k, i_r<2^{i+1}-3],$$
with
$$\begin{array}{lll}
\mathcal{I}_5 & = &\{I:I\textrm{ admissible },Q^IQ^3\neq 0\},\\
\mathcal{I}_4 & = &\{I:I\textrm{ admissible },Q^IQ^3\neq 0,\textrm{ or }I=4J\},\\
\mathcal{I}_3 & = &\{I:I\textrm{ admissible },Q^IQ^3\neq 0,\textrm{
or }Q^IQ^2Q^1\neq 0\},
\end{array}$$
where $I=4J$ means that $I$ is an admissible sequence whose all
entries are divisible by $4$. The action of the Steenrod algebra on
this subalgebra is determined by the Nishida relations. Moreover,
let the ideal $\underline{a}_{6-k}$ in this algebra be given by
$$\mathfrak{\underline{a}}_{6-k}=\langle Q^I[\eta_i]_{6-k}:\ex(I)>6-k, I\not\in\mathcal{I}_{6-k}\rangle.$$
This ideal then belongs to the kernel of
$(\Omega^{2^{i+1}-9+k}j_2)_*$ where $j_2:\Omega S^{2^i-2}\to
QS^{2^{i+1}-6}$ is the second James-Hopf invariant.
\end{mthm}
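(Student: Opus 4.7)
The strategy is to reduce each $k \in \{1,2,3\}$ to the $k=0$ case of Theorem 2 by iterated homology suspension, and then to read off the polynomial subalgebra structure from the Dyer-Lashof structure of the James-Hopf image.

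First, I identify the image under James-Hopf. By construction $\sigma_*[\eta_i]_{j}=[\eta_i]_{j+1}$, so $\sigma_*^{k}[\eta_i]_{6-k}=[\eta_i]_6$. Since $j_2$ is compatible with the loop-space structure, $\sigma_*$ commutes with $(\Omega^{m}j_2)_*$ in the appropriate sense, and Theorem 2 gives
\[
\sigma_*^{k}\bigl((\Omega^{2^{i+1}-9+k}j_2)_*[\eta_i]_{6-k}\bigr) = g_3^2.
\]
Hence $(\Omega^{2^{i+1}-9+k}j_2)_*[\eta_i]_{6-k}$ is a non-zero $(6-k)$-dimensional class whose $k$-fold suspension is $g_3^2$. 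Letting $g_{3-k}$ denote the bottom-cell image in $\Omega^{2^{i+1}-9+k}_0 QS^{2^{i+1}-6}$, one uses Kudo's transgression theorem together with the relation $\sigma_* Q^s = Q^s \sigma_*$ to enumerate the possible desuspensions of $g_3^2$: namely $Q^3 g_2$ for $k=1$; $Q^3 g_1$ or the $I=4J$-type square $Q^4 g_0$ for $k=2$; and $Q^3 g_0$ or the Kudo iterate $Q^2 Q^1 g_0$ for $k=3$. These three lists match precisely the defining conditions of $\mathcal{I}_5$, $\mathcal{I}_4$, $\mathcal{I}_3$.

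For the polynomial subalgebra, $H_*\Omega^{2^{i+1}-9+k}_0 QS^{2^{i+1}-6}$ is a polynomial algebra on its Dyer-Lashof generators, and the sub-Dyer-Lashof algebra generated by the image class above is again polynomial on admissible $Q^I$ with $\ex(I) > 6-k$ and $i_r < 2^{i+1}-3$, restricted to $I \in \mathcal{I}_{6-k}$. Since $(\Omega^{2^{i+1}-9+k}j_2)_*$ is a morphism of algebras over the Dyer-Lashof algebra and sends $Q^I[\eta_i]_{6-k}$ to these polynomial generators for $I \in \mathcal{I}_{6-k}$, the classes $Q^I[\eta_i]_{6-k}$ are algebraically independent in $H_*\Omega^{2^{i+1}-8+k}_0 S^{2^i-2}$ and hence generate a polynomial subalgebra. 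Primitivity and the $A$-module structure follow from $[\eta_i]_{6-k}$ being spherical, hence $A$-annihilated and primitive, together with the Nishida relations and the fact that Dyer-Lashof operations preserve primitives. For the kernel statement, if $I \not\in \mathcal{I}_{6-k}$ with $\ex(I) > 6-k$, then by the explicit form of the image class and the relevant Adem relations (either $Q^I Q^3 = 0$ on the appropriate dimension, or the alternative clause $I=4J$ or $Q^I Q^2 Q^1 \neq 0$ fails), $Q^I$ annihilates the image class in the target, so $Q^I[\eta_i]_{6-k} \in \ker(\Omega^{2^{i+1}-9+k}j_2)_*$.

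The main obstacle is the identification step: correctly enumerating all Kudo-type desuspensions of $g_3^2$ available in the target at each $k$, and verifying that these match the slightly delicate algebraic conditions defining $\mathcal{I}_{6-k}$. The cases $k=2,3$ are especially subtle, because the existence of inequivalent Kudo decompositions produces the extra clauses ($I = 4J$ for $\mathcal{I}_4$, and $Q^I Q^2 Q^1 \neq 0$ for $\mathcal{I}_3$) that distinguish $\mathcal{I}_4$ and $\mathcal{I}_3$ from the simpler $\mathcal{I}_5$; ruling out further spurious decompositions requires some care with Adem relations in low dimensions.
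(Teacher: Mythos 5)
Your overall architecture matches the paper's: factor $\widetilde{\nu}_{6-k}$ as $(\Omega^{2^{i+1}-9+k}j_2)\circ\eta_i$, push $Q^I[\eta_i]_{6-k}$ into $H_*QS^{3-k}$ using that an iterated loop map commutes with $Q_a$ in the stated range, and use multiplicativity plus the polynomial structure of the target to get algebraic independence; the $A$-action via Nishida relations and the kernel statement then come out as you say. The gap is in your identification step. Knowing that $(\Omega^{2^{i+1}-9+k}j_2)_*[\eta_i]_{6-k}$ suspends $k$ times to $g_3^2$ determines that class only up to an element of $\ker\sigma_*^k$ in the relevant degree, and primitivity plus $A$-annihilation do not cut this indeterminacy down to a point. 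Concretely, for $k=2$ both $Q^3g_1$ and $Q^3g_1+Q^2Q^1g_1=Q^3g_1+g_1^4$ are primitive, $A$-annihilated, and suspend to $Q^3g_2$; whether the clause $I=4J$ appears in $\mathcal{I}_4$ depends on which of these is the actual image, so an enumeration of possible Kudo-type desuspensions cannot decide the statement of the theorem. The situation is worse for $k=3$, where the image in $H_3Q_0S^0$ is $x_3+Q^2x_1+D$ with a decomposable error term $D$ that no suspension argument will see.

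The paper resolves this by taking as input the known Hurewicz images of the desuspensions of $\nu$ itself, namely $h\widetilde{\nu}_5=Q^3g_2$, $h\widetilde{\nu}_4=Q^3g_1+Q^2Q^1g_1$, and $h\widetilde{\nu}_3=x_3+Q^2x_1+D$ (classical data tied to $\nu$ being a Hopf invariant one element), and then computes $Q^I$ of these sums explicitly; the clauses defining $\mathcal{I}_{6-k}$ are read off from when the result is nonzero, e.g.\ $Q^IQ^2Q^1g_1=Q^Ig_1^4$ is nonzero exactly when all entries of $I$ are divisible by $4$. To repair your argument you must import these Hurewicz computations (or an equivalent EHP or lambda-algebra calculation) rather than attempt to derive them from suspension constraints alone.
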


\begin{mrmk}
The method of proving the above theorem can be applied to obtain a
set of generators for certain subalgebras of
$H_*\Omega_0^{2^{i+1}-8+k}S^{2^i-2}$ for $k=4,5,6$. However, it does
not tell anything about the algebraic structure of these
subalgebras.
\end{mrmk}

We have some comments on the above theorems. First, notice that
having $Q^I[\eta_i]_{6-k}$ with $I\not\in\mathcal{I}_{6-k}$, where
$k=0,1,2,3$, does not tell us much. It even does not tell us whether
or not if these terms are trivial. However, assuming that these
classes are nontrivial does not tell us about the subalgebras that
they generate. Second, we note that there is some indeterminacy in
determining the action of the Steenrod algebra on the stated
polynomial algebras in the following sense. If we are given a class
$Q^I[\eta_i]_{6-k}$ with $I\in\mathcal{I}_{6-k}$, then it is not
clear at all if $Sq^r_*Q^I[\eta_i]_{6-k}Q^J[\eta_i]_{6-k}$ with
$J\in\mathcal{I}_{6-k}$.\\
Finally, notice that in general, calculating the homology algebras
mentioned above will mostly depends on spectral sequence based
arguments. However, our method firstly provides some information
about a part of these algebras; and secondly gives geometric meaning
to some of its generators.\\

We note that previously, very little is known about the homology algebras
$H_*\Omega^{n+k}_0S^n$ and $H_*Q_0S^{-k}$. In fact we only have some information on
on the homology algebras
$H_*\Omega^{n+1}S^n$ and $H_*\Omega^{n+2}S^n$ \cite[Theorem 1.2, Corollary 1.3]{4}, as well as algebra $H_*Q_0S^{-1}$ and $H_*Q_0S^{-2}$
\cite[Theorem 1.1, Theorem 1.2]{3}. Very recently, we have described a part of $H_*Q_0S^{-2}$ \cite[section 5.8]{100}.
Moreover, we have observed that the $J$-homomorphisms detect infinite families of subalgebras inside $H_*Q_0S^{-n}$ \cite{101}.
It is almost certain that our theorems, Theorem 2 and Theorem 3, do not calculate the homology
algebras completely, nevertheless they shed light on some cases that
have not been known previously, as well as they provide some
knowledge about the algebraic structure of these algebras. In fact, they seem to detect a part of $H_*Q_0S^{-n}$ 
which is not detected by previous methods.
We finish by stating a conjecture, which predicts the behavior
of the class $[\eta_i]_6$ under the homology suspension. This reads
as following.\\ \\
\textbf{Conjecture.} The class $[\eta_i]_6\in H_6Q_0S^{-2^i+6}$ dies
under the homology suspension $\sigma_*:H_*Q_0S^{-2^i+6}\to
H_{*+1}Q_0S^{-2^i+7}$. Consequently, the subalgebra of
$H_*Q_0S^{-2^i+6}$ generated by $Q^I[\eta_i]_6$ belong to
$\ker\sigma_*$.\\

Finally we note that techniques to prove the above results maybe
applied in a wider generality. For instance, we may use the
classical Hopf invariant one elements to do a similar job. Notice
that the Hopf invariant one elements map nontrivially under the
Hurewicz homomorphism $h:{_2\pi_*}Q_0S^0\to H_*Q_0S^0$. We state the
following and leave the proof to reader.

\begin{mthm}
Let $i=0,1,2,3$ and consider $\nu\in{_2\pi_3^S}$, and let
$[\nu]_i\in H_iQ_0S^{-3+i}$ be the image of $\nu$ under the Hurewicz
homomorphism
$$h:{_2\pi_*}Q_0S^{-3+i}\to H_*Q_0S^{-3+i}.$$
This class pulls back to a spherical class $[\nu]_i\in
H_i\Omega^{7-i}S^4$. This class gives rise to a primitively
generated polynomial algebra inside $H_*\Omega^{7-i}S^4$ given by
$$\Z/2[Q^I[\nu]_i:I\in\textrm{ admissible },\ex(I)>0,i_r<6].$$
The action of the Steenrod algebra on this polynomial algebra is
completely determined by the Nishida relations. Moreover, this
subalgebra maps monomorphically under $(\Omega^{6-i}j_2)_*$ where
$j_2:\Omega S^4\to QS^6$ is the second James-Hopf invariant.
\end{mthm}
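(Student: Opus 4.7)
The plan is to parallel the argument used for Mahowald's $\eta_i$, replacing the Mahowald construction with the classical unstable representative of $\nu$ afforded by the Hopf invariant one property.

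First I would exploit the Hopf invariant one property. The class $\nu \in {_2\pi_3^S}$ lifts (uniquely in the stable range) to an unstable map $\tilde\nu : S^7 \to S^4$ of Hopf invariant one; equivalently, the composite
$$\xymatrix{S^6 \ar[r]^-{\hat\nu} & \Omega S^4 \ar[r]^{j_2} & QS^6}$$
realises the inclusion of the bottom cell. For $0\leqslant i\leqslant 3$ loop adjointness yields $\tilde\nu_i : S^i \to \Omega^{7-i}_0 S^4$, and I set $[\nu]_i := h(\tilde\nu_i)$. The stabilisation $\Omega^{7-i}_0 S^4 \to Q_0 S^{i-3}$ carries this to the stable Hurewicz image of $\nu$, so the class in $H_iQ_0S^{i-3}$ indeed pulls back.

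Second, the map $\Omega^{6-i} j_2 : \Omega^{7-i}_0 S^4 \to \Omega^{6-i}_0 Q S^6 = Q S^i$ detects $[\nu]_i$: by naturality of loops applied to the Hopf invariant one computation,
$$(\Omega^{6-i}j_2)_* [\nu]_i = g_i \in H_i S^i \subset H_i Q S^i,$$
proving that $[\nu]_i$ is a nontrivial spherical class. Because $\Omega^{6-i}j_2$ is a map of $(6-i)$-fold loop spaces, the Kudo--Araki operations $Q^s$ available in the source commute with $(\Omega^{6-i}j_2)_*$ in the appropriate range, hence
$$(\Omega^{6-i}j_2)_*(Q^I[\nu]_i) = Q^I g_i$$
for every admissible $I=(i_1,\ldots,i_r)$ with $i_r < 6$, which is exactly the unstable operation range in $\Omega^{7-i}_0 S^4$ acting on a class of dimension $i$.

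Third, I would invoke the Dyer--Lashof--May description of $H_* Q S^i$ as a polynomial algebra on $g_i$ together with the classes $Q^I g_i$ for admissible $I$ with $\ex(I) > 0$. Restricting to $i_r < 6$ carves out a polynomial subalgebra of the target, and the computation of the previous paragraph shows that $(\Omega^{6-i}j_2)_*$ sends the candidate generators $Q^I[\nu]_i$ bijectively onto these polynomial generators. Consequently $(\Omega^{6-i}j_2)_*$ is monomorphic on the subalgebra they generate, and that subalgebra is free polynomial on the stated generators. Primitivity of $[\nu]_i$ (inherited from sphericity) propagates to $Q^I[\nu]_i$ via the Cartan formula and the fact that Dyer--Lashof operations preserve primitives. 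The Steenrod action is then completely determined by the Nishida relations, which close on the subalgebra because $Sq^r_* [\nu]_i = 0$ for $r > 0$.

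The principal obstacle is the bookkeeping that pins down the operation range $i_r < 6$ and establishes the algebraic independence of the classes $Q^I g_i$ in the target. The upper bound is dictated by the unstable Kudo--Araki range in $\Omega^{7-i}_0 S^4$ together with compatibility with the loop-level James--Hopf map; independence is a consequence of the polynomial presentation of $H_* Q S^i$ once one checks that the admissible monomials with positive excess on an $i$-dimensional class remain distinct after the restriction $i_r < 6$. Both checks are direct transcriptions of the analogous step in the proof of Theorem~2.
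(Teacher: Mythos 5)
Your proposal is correct and is precisely the argument the paper intends: the paper leaves this proof to the reader, indicating only the key fact that $j_2$ carries $\nu$ to the bottom-cell generator of ${_2\pi_6}QS^6$, and your write-up is a faithful transcription of the proofs of Theorems 1 and 2 with that input (unstable Hopf-invariant-one representative, loop adjunctions, commutation of $(\Omega^{6-i}j_2)_*$ with the operations in range, and the polynomial structure of $H_*QS^i$). One caveat, inherited from the statement itself rather than introduced by you: for $i\geqslant 1$ the algebraically independent generators of $H_*QS^i$ are the $Q^Ig_i$ with $\ex(I)>i$ (admissible $I$ with $0<\ex(I)<i$ gives zero and $\ex(I)=i$ gives squares of shorter generators), so the excess condition should be read as $\ex(I)>i$ rather than $\ex(I)>0$ for the claimed bijection onto polynomial generators, and hence the monomorphism claim, to go through.
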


The key fact in the proof will be that $\nu$ maps to the identity
element under $j_2:{_2\pi_6}S^6\to{_2\pi_6}QS^6$. A similar
statement can be made about $\sigma\in{_2\pi_7^S}$, and the outcome
seems more interesting as we get more loops!

\begin{mthm}
Let $i=0,1,2,\ldots,7$ and consider $\sigma\in{_2\pi_7^S}$, and let
$[\sigma]_i\in H_iQ_0S^{-7+i}$ be the image of $\nu$ under the
Hurewicz homomorphism
$$h:{_2\pi_*}Q_0S^{-7+i}\to H_*Q_0S^{-7+i}.$$
This class pulls back to a spherical class $[\sigma]_i\in
H_i\Omega^{15-i}S^4$. This class gives rise to a primitively
generated polynomial algebra inside $H_*\Omega^{15-i}S^4$ given by
$$\Z/2[Q^I[\sigma]_i:I\in\textrm{ admissible },\ex(I)>0,i_r<14].$$
The action of the Steenrod algebra on this polynomial algebra is
completely determined by the Nishida relations. Moreover, this
subalgebra maps monomorphically  under $(\Omega^{14-i}j_2)_*$ where
$j_2:\Omega S^4\to QS^6$ is the second James-Hopf invariant.
\end{mthm}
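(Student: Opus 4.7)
The plan is to follow the strategy indicated for Theorem 5 verbatim, substituting the Hopf invariant one element $\sigma\in{_2\pi_7^S}$ for $\nu$ and the pair of spheres $(S^8,S^{14})$ for $(S^4,S^6)$. The critical input will be that $\sigma$, realised unstably as $S^{15}\to S^8$, has Hopf invariant one; equivalently, writing $\widetilde\sigma\colon S^{14}\to\Omega S^8$ for its adjoint, the composite
$$j_2\circ\widetilde\sigma\colon S^{14}\longrightarrow QS^{14}$$
represents the generator of $\pi_{14}QS^{14}\cong\pi_0^S$, so on homology $(j_2)_*h(\widetilde\sigma)=\iota_{14}$, the fundamental class. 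Looping this identification down $(14-i)$ times and using $\Omega^{14-i}QS^{14}\simeq QS^i$, we obtain, for each $0\leqslant i\leqslant 7$,
$$(\Omega^{14-i}j_2)_*[\sigma]_i=\iota_i\in H_iQS^i,$$
where $\iota_i$ is the Hurewicz image of the bottom-cell inclusion $S^i\hookrightarrow QS^i$. In particular $[\sigma]_i$ is nonzero in $H_i\Omega^{15-i}S^8$ and stabilises to a nonzero class in $H_iQ_0S^{-7+i}$, giving the spherical class asserted by the theorem.

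The polynomial algebra structure is now obtained by transport along $(\Omega^{14-i}j_2)_*$. Naturality of the Kudo-Araki-Dyer-Lashof operations under this loop map gives
$$(\Omega^{14-i}j_2)_*Q^I[\sigma]_i=Q^I\iota_i$$
for each admissible $I=(i_1,\ldots,i_r)$ with $\ex(I)>0$ and $i_r<14$. The classes $Q^I\iota_i$ subject to these restrictions are algebraically independent and generate a polynomial subalgebra of $H_*QS^i$. Since $(\Omega^{14-i}j_2)_*$ is an algebra homomorphism, the classes $Q^I[\sigma]_i$ in the source are likewise algebraically independent, generate the asserted polynomial subalgebra of $H_*\Omega^{15-i}S^8$, and the homomorphism sends this subalgebra monomorphically onto its image. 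Primitivity of each generator descends through the Hopf-algebra monomorphism from primitivity of the $Q^I\iota_i$, and the Steenrod module structure is determined by the Nishida relations applied to the $A$-annihilated class $\iota_i$ and transferred by naturality of the $A$-action.

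The main obstacle will be the careful bookkeeping around the last-entry bound $i_r<14$ relative to the range of Dyer-Lashof operations available on the finite loop space $\Omega^{15-i}S^8$. The $(15-i)$-fold loop structure alone directly produces only operations $Q^s$ with $s<14-i$, which is strictly tighter than $i_r<14$ whenever $i\geqslant 1$. Consequently the operations $Q^I$ with large last entry must be interpreted through the Snaith splitting of $\Omega S^8$ and the composition with $j_2$ landing in the infinite loop space $QS^i$, and one needs to verify that the resulting classes remain nontrivial and algebraically independent in $H_*\Omega^{15-i}S^8$ rather than collapsing in the source. This is the analogue of the delicate step in the proof of Theorem 3 and is where all the real work lies.
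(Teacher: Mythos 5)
Your proposal is essentially the paper's intended argument: the paper explicitly leaves the proof of this theorem to the reader, indicating only that the key input is that the Hopf invariant one element maps to the identity under $j_2$, and your steps --- $(j_2)_*h(\widetilde\sigma)=g_{14}$, looping down to get $(\Omega^{14-i}j_2)_*[\sigma]_i=g_i$, then transporting the polynomial structure of $H_*QS^i$ back along the multiplicative map $(\Omega^{14-i}j_2)_*$ via naturality of the Dyer--Lashof operations --- follow exactly the pattern of the proofs of Theorems 1 and 2. The one thing you should delete is your closing paragraph: the ``main obstacle'' you describe is an artifact of conflating upper- and lower-indexed operations. The $(15-i)$-fold loop structure supplies $Q_a$ for $a<14-i$ in lower notation; on the degree-$i$ class $[\sigma]_i$ this reads $Q^s=Q_{s-i}$ for $s<14$, and for an admissible iterate $Q^I$ the innermost lower index $i_r-i$ is the largest, so the constraint $a_j<14-i$ for all $j$ is precisely $i_r<14$. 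Every operation in the statement is therefore already furnished by the finite loop space structure, and no appeal to the Snaith splitting is needed; the only genuine bookkeeping point (which the paper itself also elides) is the off-by-one between operations defined on $\Omega^{15-i}S^8$ and operations commuting with the $(14-i)$-fold loop map $\Omega^{14-i}j_2$. A smaller caveat: as in Theorems 2 and 3, where the condition is $\ex(I)>6-k$, the generators that are genuinely independent are those with $\ex(I)>i$ (for $0<\ex(I)<i$ the class $Q^Ig_i$ vanishes, and for $\ex(I)=i$ it is a square), but that imprecision sits in the statement itself rather than in your argument.
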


In a recent work \cite{101} we have done a similar job on those elements
of ${_2\pi_*^S}$ which belong to the image of the $J$-homomorphism.
Although $\eta_i$ does not belong to the this image, however we will
use similar techniques there as well.\\

\tb{\textit{Important Note}.} 
We have detected polynomial subalgebras inside the homology algebras $H_*\Omega_0^{2^{i+1}-8+k}S^{2^i-2}$ for $k=4,5,6$. 
The application of the Steenrod operations then will detect infinitely many other terms inside these algebras that give rise to
polynomial subalgebras as done in \cite[Lemma 6.3]{3}. If we have a class $Q^I[\eta_i]_j$ such as given by previous theorems, and a class
$\xi$ such that $Sq^r_*\xi=Q^I[\eta_i]_j$, then we know that $\xi\neq 0$. The relations such as
$$Sq^{2r}_*\xi^2=(Sq^r_*\xi)^2=(Q^I[\eta_i]_j)^2\neq 0$$
show that $\xi^{2^t}\neq 0$. Therefore, the class $\xi$, as well as classes of the form $Q^i\xi$ for 
suitable choices of $I$, will give rise to polynomial subalgebra inside $H_*\Omega_0^{2^{i+1}-8+k}S^{2^i-2}$ for $k=4,5,6$.\\

The rest of this paper is devoted to the proof of this results and
related calculation. We start by proving our results in the unstable
case. We shall then provide the reader with the proof of our main
result.\\

\tb{Acknowledgement.} The second named author has been a self-funded visitor at the University of Manchester, and wishes
to express his gratitude towards the School of Mathematics for providing him with the facilities to carry on with this work, as well
as support of many individuals within the school. He is also grateful to his
family for the financial support. Both authors are grateful to Fred Cohen for helpful comments.

\section{Proof of Theorem 1}

The proof of our theorems are based on two basic observations. The
first observation is an equivalence between two definitions of the
Hopf invariant. We recall the following result \cite[Proposition
4.4]{5}.

\begin{lmm}
Let $\alpha\in{_2\pi_{2m}}QX$. Then $h\alpha=x_m^2$ with $x\in H_mX$
if and only if the stable adjoint of $\alpha$ is detected by
$Sq^{m+1}$ on $x_m$ in its stable mapping cone. Here $h$ is the
Hurewicz homomorphism
$$h:{_2\pi_*}QX\to H_*QX.$$
\end{lmm}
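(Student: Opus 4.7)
The plan is to route both conditions through the second James--Hopf map $j_2\colon QX\to QD_2X$, where $D_2X=E\Sigma_2^+\wedge_{\Sigma_2}X^{\wedge 2}$ is the second extended power, and then to recognise the resulting common condition as the classical stable quadratic Hopf invariant.

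First, I would invoke the standard homological description of $j_2$: primitive classes coming from $X\subset QX$ are annihilated by $(j_2)_*$, while the Pontryagin square $x_m\cdot x_m$ of a degree $m$ class is carried to the bottom-cell class $x_m\otimes x_m\in H_{2m}D_2X\subset H_{2m}QD_2X$ (this is the Snaith identification of the second summand, or equivalently the Dyer--Lashof identity $Q^m(x_m)=x_m^2$ in degree $m$). Consequently, $h\alpha = x_m^2$ in $H_{2m}QX$ holds if and only if $h(j_2\alpha)$ equals this bottom-cell class. Because $D_2X$ is $(2m-1)$-connected with bottom cell in dimension $2m$, this in turn is equivalent to the map $j_2\alpha\colon S^{2m}\to QD_2X$ being, stably, the inclusion of the bottom cell of $D_2X$ with degree $1$.

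Second, I would appeal to the Boardman--Steer identification of the loopspace James--Hopf invariant with the stable quadratic Hopf invariant: the stable map obtained by composing $\alpha$ with $j_2$ agrees with $h_2(\alpha^{ad})\colon S^{2m}\to D_2X$, where $\alpha^{ad}$ is the stable adjoint of $\alpha$. Under this identification, bottom-cell detection of $j_2\alpha$ translates into non-vanishing of $h_2(\alpha^{ad})$ on the bottom cell of $D_2X$. The chain then closes by the classical functional-operation interpretation of the second Hopf invariant: $h_2(\alpha^{ad})$ hits the bottom cell of $D_2X$ non-trivially precisely when $Sq^{m+1}$ acts non-trivially from the dual of $x_m$ to the dual of the top cell in the stable mapping cone $X\cup_{\alpha^{ad}}e^{2m+1}$. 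This is the usual description of $Sq^{m+1}$ as a functional operation witnessing the attaching-map class, and it reflects, on the cohomology side, the same equality $Q^m(x_m)=x_m^2$ that governed the first step.

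The hard part, and the reason this statement is imported as a lemma from \cite{5} rather than proved in situ, is the Boardman--Steer identification in the middle: one must verify that the loopspace James--Hopf map $j_2$, built out of the Snaith splitting of $\Sigma^\infty QX$, really coincides with the stable quadratic Hopf invariant defined by diagonal approximations on the mapping cone, and that the Dyer--Lashof square matches the functional Steenrod square bottom-cell by bottom-cell. Once that naturality is in hand, the two outer equivalences are formal.
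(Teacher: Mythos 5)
The paper does not actually prove this lemma: it is quoted verbatim from Proposition 4.4 of Eccles's paper \emph{Characteristic numbers of immersions and self-intersection manifolds} (reference [E93]), so there is no in-paper argument to measure your proposal against. That said, your route --- factor through the second James--Hopf map $j_2\colon QX\to QD_2X$, identify the stable class of $j_2\circ\alpha$ with the quadratic Hopf invariant of the stable adjoint \`a la Boardman--Steer, and read off $Sq^{m+1}$ in the mapping cone as the pairing of that Hopf invariant with the bottom cell of $D_2X$ --- is the standard argument and almost certainly the one underlying the cited result. It is also exactly the mechanism this paper exploits downstream, where ``$h\alpha$ is a square'' is repeatedly converted into ``$(j_2)_*h\alpha$ is a bottom-cell class.'' You are right that the one substantive input is the compatibility of the James--Hopf map with the cohomological Hopf invariant, and you flag that honestly.

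One step needs more care than you give it: the claimed equivalence ``$h\alpha=x_m^2$ if and only if $h(j_2\alpha)$ is the bottom-cell class.'' Assuming, as you implicitly do, that $X$ has its bottom cell in dimension $m$ (a hypothesis the lemma does not state but which holds in every application here), the weight decomposition of $H_{2m}QX$ has only the weight-one summand $H_{2m}X$ and the weight-two summand $H_{2m}D_2X$ in total degree $2m$; $(j_2)_*$ annihilates the former and is injective on the latter. So the backward implication only yields $h\alpha=x_m^2+c$ with $c\in H_{2m}X$, i.e.\ it determines $h\alpha$ modulo a weight-one error term, and the exact equality asserted in the lemma requires either $H_{2m}X=0$ (true in this paper, where $X$ is a sphere or a stunted projective space in the relevant range) or a further argument killing that component. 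This is as much a looseness in the lemma's statement as in your proof, but it should be acknowledged. Note also that what you have written is a reduction to standard facts --- the homological formula for $(j_2)_*$ and the Boardman--Steer identification are asserted, not proved --- rather than a self-contained argument; to be fair, the paper itself does no more, since it simply cites [E93].
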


The second observation is provided by the fact that the class
$\eta_i\in{_2\pi_{2^i}^S}$ pulls back to ${_2\pi_{2^{i+1}-2}}S^{2^i-2}$, i.e.
$$S^{2^{i+1}-2}\to S^{2^i-2},$$
and maps to $\nu\in{_2\pi_3^S}$ under the second James-Hopf
invariant \cite{1}. Here by the $2$nd James-Hopf invariant we mean
$$j_2:\Omega\Sigma X\to Q(X\wedge X),$$
where in our case $X=S^{2^i-3}$. In this case, the fact that
$\eta_i$ has Hopf invariant $\nu$ means that $j_2\eta_i=\nu$. This
implies that as an unstable mapping $\nu$ is given by the following
composite
$$\xymatrix{S^{2^{i+1}-3}\ar[r]^-{\eta_i} & \Omega S^{2^i-2}\ar[r]^-{j_2} & QS^{2^{i+1}-6}.}$$
Here $\eta_i:S^{2^{i+1}-3}\to\Omega S^{2^i-2}$ is the adjoint to the
mapping $S^{2^{i+1}-2}\to S^{2^i-2}$. The mapping $\nu$ is detected
by $Sq^4$ on $g_{2^{i+1}-6}$ in its mapping cone, where
$g_{2^{i+1}-6}\in H_{2^{i+1}-6}QS^{2^{i+1}-6}$ is the generator given by the inclusion $S^{2^{i+1}-6}\to QS^{2^{i+1}-6}$.
We may adjoint down the above composite to obtain the following
composite
$$\xymatrix{\nu:S^{7}\ar[r]^-{\eta_i} & \Omega_0^{2^{i+1}-9}S^{2^i-2}\ar[rr]^-{\Omega^{2^{i+1}-10}j_2} & &QS^4.}$$
This composite it detected by $Sq^4$ on $g_4\in H_4QS^4$ in its
mapping cone. Applying Lemma 2.1 implies that if adjoint down once
more, we then obtain a mapping which is detected by homology. More
precisely, the composite
\begin{equation}
\xymatrix{\widetilde{\nu}_6:S^6\ar[r]^-{\eta_i} &
\Omega^{2^{i+1}-8}_0S^{2^i-2}\ar[rr]^-{\Omega^{2^{i+1}-9}j_2} &
&QS^3}
\end{equation}
is detected by
$$\widetilde{\nu}_{6*}g_6=g_3^2$$
where $\widetilde{\nu}_6$ denotes adjoint of $\nu$, and $g_3\in
H_3QS^3$ is a generator given by the inclusion $S^3\to QS^3$.
Setting $[\eta_i]_6=h\eta_i$ we then have $[\eta_i]_6\neq 0$ in
$H_6\Omega^{2^{i+1}-8}_0S^{2^i-2}$ and that
$$({\Omega^{2^{i+1}-7}j_2})_*[\eta_i]_6=g_3^2$$
where $h:{_2\pi_6}\Omega_0^{2^{i+1}-8}S^{2^i-2}\to
H_6\Omega_0^{2^{i+1}-8}S^{2^i-2}$ denotes the Hurewicz
homomorphism.\\

As we mentioned earlier, the homology of the space
$\Omega^{2^{i+1}-8}_0S^{2^i-6}$ admits operations
$$Q_a:H_*\Omega_0^{2^{i+1}-8+k}S^{2^i-2}\to H_{a+2*}\Omega_0^{2^{i+1}-8+k}S^{2^i-2}$$
for $a<(2^{i+1}-8)-1$. We like to investigate the $R$-module spanned
by $[\eta_i]_6$, i.e. the module spanned by elements of the form
$Q_I[\eta_i]_6$ with $I=(i_1,\ldots,i_r)$ such that
$$0<i_1\leqslant i_2\leqslant\cdots\leqslant i_r<2^{i+1}-9.$$
The mapping
$$\Omega^{2^{i+1}-9}j_2:\Omega^{2^{i+1}-8}_0S^{2^i-2}\to QS^6$$
is a $(2^{i+1}-9)$-fold loop map. This implies that
$(\Omega^{2^{i+1}-9}j_2)_*$ commutes with all classes of the form
$Q_I[\eta_i]_6$ with $i_r<(2^{i+1}-9)-1=2^{i+1}-9$, i.e. having
$Q_I[\eta_i]_6$ with $0<i_1\leqslant\cdots\leqslant i_r<2^{i+1}-9$
then we have
$$(\Omega^{2^{i+1}-9}j_2)_*Q_I[\eta_i]_6=Q_I(\Omega^{2^{i+1}-9}j_2)_*[\eta_i]_6=Q_Ig_3^2.$$
Let us write $I=2K$ if $K=(k_1,\ldots,k_r)$ with $i_j=2k_j$ for any
$1\leqslant j\leqslant r$. We then have
$$\begin{array}{lllll}
(\Omega^{2^{i+1}-9}j_2)_*Q_I[\eta_i]_6 & = &\left\{\begin{array}{ll} 0          &\textrm{ if }i_j\textrm{ is odd for some }j\\ \\
                                                                    (Q_Kg_3)^2  &\textrm{ if }I=2K.
\end{array}\right.
\end{array}$$

This implies that if $I=2K$, then $Q_I[\eta_i]_6\neq 0$. On the
other hand notice that $\Omega^{2^{i+1}-7}j_2$ is an iterated loop
map, which in particular implies $(\Omega^{2^{i+1}-7}j_2)_*$ is a
multiplicative map. Also, notice that $H_*QS^3$ is a polynomial
algebra. Hence, if we have an arbitrary pair of terms
$Q_I[\eta_i]_6$, $Q_L[\eta_i]_6$ which map nontrivially under
$(\Omega^{2^{i+1}-7}j_2)_*$ then their product will map nontrivially
under this homomorphism. This then implies that
$$\Z/2[Q_I[\eta_i]_6: I=2K\textrm{ increasing, }i_1>0,i_r<2^{i+1}-9]$$
is a polynomial algebra living in
$H_*\Omega_0^{2^{i+1}-8}S^{2^i-2}$. Recall that for a
$d$-dimensional class $\xi$ we have $Q_a\xi=Q^{a+d}\xi$. Hence, we
may rewrite the above polynomial algebra as
$$\Z/2[Q^I[\eta_i]_6: I=2K\textrm{ admissible }i_1>0,i_r<2^{i+1}-3].$$
We note that $I=2K$ are all of the sequences living in
$\mathcal{I}_6$. Finally notice that the class $[\eta_i]_6$ is an
$A$-annihilated class. Hence, to describe the action of Steenrod
operations $Sq^t_*$ on $Q^I[\eta_i]_6$ we only need to apply Nishida
relations. This completes the proof of Theorem 1.

\section{Proof of Theorem 2}

The proof of this result is similar to the proof of Theorem 1. We
like to draw reader's attention to the following table, where the
left hand side denotes the mapping $\nu$, suspended down, and the
right hand side denotes the Hurewicz image of the corresponding
mapping
$$\begin{array}{lllll}
\widetilde{\nu}_6:S^6\to QS^3    & & &h\widetilde{\nu}_6=Q^3g_3,\\
\widetilde{\nu}_5:S^5\to QS^2    & & &h\widetilde{\nu}_5=Q^3g_2,\\
\widetilde{\nu}_4:S^4\to QS^1    & & &h\widetilde{\nu}_4=Q^3g_1+Q^2Q^1g_1,\\
\widetilde{\nu}_3:S^3\to Q_0S^0    & & &h\widetilde{\nu}_3=x_3+Q^2x_1+D,\\
\widetilde{\nu}_2:S^2\to Q_0S^{-1} & & &h\widetilde{\nu}_2=w'_2,\\
\widetilde{\nu}_1:S^1\to Q_0S^{-2} & & &h\widetilde{\nu}_1=p_1^{S^{-2}},\\
\end{array}$$
where $D$ denotes a sum of decomposable terms, $w'_2\in
H_2Q_0S^{-1}$ is an $A$-annihilated primitive class with
$\sigma_*w'_2=p'_3=x_3+Q^2x_1+D$, and $p_1^{S^{-2}}\in H_1Q_0S^{-2}$
is an $A$-annihilated primitive class with
$\sigma_*p_1^{S^{-2}}=w'_2$. Recall that $(1)$ provided us with a
decomposition for $\widetilde{\nu}_6$. This allows us to have the
following decompositions for $\widetilde{\nu}_5$,
$\widetilde{\nu}_4$, and $\widetilde{\nu}_3$ respectively
$$\xymatrix{
\widetilde{\nu}_5:S^5\ar[r]^-{\eta_i} & \Omega^{2^{i+1}-7}_0S^{2^i-2}\ar[rr]^-{\Omega^{2^{i+1}-8}j_2} & &QS^2,\\
\widetilde{\nu}_4:S^4\ar[r]^-{\eta_i} & \Omega^{2^{i+1}-6}_0S^{2^i-2}\ar[rr]^-{\Omega^{2^{i+1}-7}j_2} & &QS^1,\\
\widetilde{\nu}_3:S^3\ar[r]^-{\eta_i} &
\Omega^{2^{i+1}-5}_0S^{2^i-2}\ar[rr]^-{\Omega^{2^{i+1}-6}j_2} &
&Q_0S^0.}$$ Now we can complete proof of Theorem 2. We only do one
case and leave the other cases to the reader.\\
Consider $\widetilde{\nu}_4:S^4\to QS^1$ with
$h\widetilde{\nu}_4=Q^3g_1+Q^2Q^1g_1$. This then implies that
$$[\eta_i]_4=h\eta_i\neq 0.$$
Moreover, this shows that
$$(\Omega^{2^{i+1}-7}j_2)_*[\eta_i]_4=Q^3g_1+Q^2Q^1g_1.$$
Next, we like to consider the subalgebra of generated
$H_*\Omega^{2^{i+1}-6}S^{2^i-2}$ by classes $Q^I[\eta_i]_4$. The
homology of the space $\Omega^{2^{i+1}-6}S^{2^i-2}$ admits
operations
$$Q_a:H_*\Omega^{2^{i+1}-6}_0S^{2^i-2}\to H_{a+2*}\Omega^{2^{i+1}-6}_0S^{2^i-2}$$
with $a<(2^{i+1}-6)-1$. This then implies that the mapping
$(\Omega^{2^{i+1}-7}j_2)_*$ commutes with $Q_I[\eta_i]_4$ where
$I=(i_1,\ldots,i_r)$ such that $0<i_1\leqslant\cdots\leqslant
i_r<2^{i+1}-7$. Notice that written with operations $Q^I$ we then
look for the subalgebra generated by the classes of the form
$Q^I[\eta_i]_4$ with $I$ admissible and $i_r<2^{i+1}-3$. This yields
the following
$$\begin{array}{lll}
(\Omega^{2^{i+1}-7}j_2)_*Q^I[\eta_i]_4 & = & Q^I(\Omega^{2^{i+1}-7}j_2)_*[\eta_i]_4\\
                                       & = & Q^I(Q^3g_1+Q^2Q^1g_1)\\
                                       & = & Q^IQ^3g_1+Q^IQ^2Q^1g_1.
\end{array}$$
Notice that in the above sum the second term is of the form
$Q^Ig_1^4$. Therefore, the above sum is nontrivial only if either
$Q^IQ^3\neq 0$, or all entries of $I$ are divisible by $4$. Notice
that this characterises the set of sequences belonging to
$\mathcal{I}_4$. The fact that $H_*QS^1$ is a polynomial algebra,
combined with the fact that $(\Omega^{2^{i+1}-7}j_2)_*$ is a
multiplicative map, implies that the subalgebra of
$H_*\Omega^{2^{i+1}-6}_0S^{2^i-2}$ generated by classes of the form
$Q^I[\eta_i]_4$ is a polynomial algebra, i.e. we have a primitively
generated subalgebra sitting inside $H_*\Omega^{2^{i+1}-6}S^{2^i-2}$
determined by
$$\Z/2[Q^I[\eta_i]_4:I\in\mathcal{I}_4, \ex(I)>4, i_r<2^{i+1}-3].$$
Notice that if $I\not\in\mathcal{I}_4$ then
$(\Omega^{2^{i+1}-7}j_2)_*Q^I[\eta_i]_4=0$. This means that the
ideal $\mathfrak{\underline{a}}_4\subseteq
H_*\Omega^{2^{i+1}-6}_0S^{2^i-2}$ generated by such classes belongs
to the kernel of $(\Omega^{2^{i+1}-7}j_2)_*$, i.e.
$$\mathfrak{\underline{a}}_4=\langle Q^I[\eta_i]_4:\ex(I)>4, I\not\in\mathcal{I}_4\rangle\subseteq\ker(\Omega^{2^{i+1}-7}j_2)_*.$$
This completes the proof of Theorem 2.

\section{Stablisation: The Main Theorem}
We like to restate our results when the finite loop spaces are
replaced with infinite loop spaces. More precisely, notice that
there is a mapping
$$E:\Omega S^{2^i-2}\to QS^{2^i-3}.$$
Applying the iterated loop functor $\Omega^{2^{i+1}-9}$ to this
mapping we obtain
$$\Omega^{2^{i+1}-9}E:\Omega^{2^{i+1}-8}S^{2^i-2}\to QS^{-2^i+6}$$
where restricting to base point components yields
$$\Omega_0^{2^{i+1}-8}S^{2^i-2}\to Q_0S^{-2^i+6}.$$
We then may consider the mapping
$$(\Omega^{2^{i+1}-9}E)_*:H_*\Omega^{2^{i+1}-8}_0S^{2^i-2}\to
H_*Q_0S^{-2^i+6}$$ and the image of the polynomials identified by
Theorem 2.\\
Previously, we used James-Hopf invariant $j_2:\Omega S^{2^i-2}\to
QS^{2^{i+1}-6}$ and its iterated loop. In the stable case, we
consider the stable James-Hopf invariant
$$j^\infty_2:QS^{2^i-3}\to Q\Sigma^{2^i-3}P_{2^i-3}$$
where the upper index $\infty$ is used to note that this is a map
associated with infinite loop spaces. Applying $\Omega^{2^{i+1}-9}$
to $j^\infty_2$ we obtain
$$Q_0S^{-2^i+6}\to Q_0\Sigma^{-2^i+6}P_{2^i-3}.$$
We recall that there is a commutative diagram given by
\begin{equation}
\xymatrix{\Omega S^{2^i-2}\ar[r]^-{j_2}\ar[d]_-E & QS^{2^{i+1}-6}\ar[d]^-i\\
            QS^{2^i-3}\ar[r]^-{j^\infty_2}    &
            Q\Sigma^{2^i-3}P_{2^i-3}.}
            \end{equation}
In particular, the mapping $S^{2^{i+1}-6}\to QS^{2^{i+1}-6}\to
Q\Sigma^{2^i-3}P_{2^i-3}$ may be viewed as the inclusion of the
bottom cell, and is nontrivial in homology. Applying
$\Omega^{2^{i+1}-9}$ to this diagram we obtain
$$\xymatrix{\Omega_0^{2^{i+1}-8}S^{2^i-2}\ar[r]\ar[d] & QS^3\ar[d]\\
            Q_0S^{-2^i+6}\ar[r]                       &
            Q_0\Sigma^{-2^i+6}P_{2^i-3}.}$$
Notice that we like to study the composite
$$j_2^\infty\eta_i:S^{2^{i+1}-3}\to QS^{2^i-3}\to Q\Sigma^{2^i-3}P_{2^i-3}.$$
This allows us to restrict our attention to
\begin{equation}
j_2^\infty\eta_i:S^{2^{i+1}-3}\to QS^{2^i-3}\to
Q\Sigma^{2^i-3}P^{2^i}_{2^i-3}.
\end{equation}
The fact that $\eta_i$ maps to $\nu\in{_2\pi_3^S}$ under the Hopf
invariant implies that (3) should be detected by $Sq^4$ on the
bottom cell, i.e. by $Sq^4$ on $\Sigma^{2^i-3}a_{2^i-3}$. Like the
proof of Theorem 2, adjointing down, $(2^{i+1}-10)$-times, we obtain
\begin{equation}
\xymatrix{S^7\ar[r]&
QS^{-2^i+7}\ar[rr]^-{\Omega^{2^{i+1}-10}j_2^\infty}& &
Q\Sigma^{-2^i+7}P_{2^i-3}}.
\end{equation}
Our claim then is that this mapping is detected by $Sq^4$ on a
$4$-dimensional homology class, say $\Sigma^{-2^i+7}a_{2^i-3}\in
H_4Q\Sigma^{-2^i+7}P_{2^i-3}$. It is not difficult to see that there
is a such homology class. Applying the iteration loop functor
$\Omega^{2^{i+1}-10}$ to diagram (2) and taking homology results the
following commutative diagram
$$\xymatrix{
H_*QS^{2^{i+1}-6}\ar[rr]^-{i_*}&                      & H_*Q\Sigma^{2^i-3}P_{2^i-3}\\
H_*QS^4\ar[u]^-{\sigma_*^{2^{i+1}-10}}\ar[rr]^-{(\Omega^{2^{i+1}-10}i)_*}&
& H_*Q\Sigma^{-2^i+7}P_{2^i-3}.\ar[u]_-{\sigma_*^{2^{i+1}-10}}}$$
Here we have used $\sigma_*^{2^{i+1}-10}$ to denote the iterated
homology suspension. Notice that
$$\sigma_*^{2^{i+1}-10}(\Omega^{2^{i+1}-10}i)_*g_4=i_*\sigma_*^{2^{i+1}-10}g_4=i_*g_{2^{i+1}-6}=\Sigma^{2^i-3}a_{2^i-3}.$$
This allows us to define
$$\Sigma^{-2^i+7}a_{2^i-3}=(\Omega^{2^{i+1}-10}i)_*g_4$$
with the property that
$$\sigma_*^{2^{i+1}-10}\Sigma^{-2^i+7}a_{2^i-3}=\Sigma^{2^i-3}a_{2^i-3}.$$
Here $g_4\in H_4QS^4$ is the generator given by $S^4\to QS^4$.
Similarly, we may define $\Sigma^{-2^i+6}a_{2^i-3}\in
H_3Q\Sigma^{-2^i+6}P_{2^i-3}$ by
$$\Sigma^{-2^i+6}a_{2^i-3}=(\Omega^{2^{i+1}-9}i)_*g_3.$$
The observation that $g_3\in H_3QS^3$ and $g_4\in H_4QS^4$ are
spherical implies that the classes $\Sigma^{-2^i+6}a_{2^i-3}$,
$\Sigma^{-2^i+7}a_{2^i-3}$ are also spherical classes in the
respective homology groups. Notice that these are quite natural to
expect, as for instance $\Sigma^{-2^i+6}a_{2^i-3}$ corresponds to
the bottom call of $\Sigma^{-2^i+6}P_{2^i-3}$ whereas we know that a
bottom cells always are given by spherical classes.\\

Now we ready to prove our Main Theorem. We recall the statement that
we want to prove.\\
\textbf{Main Theorem.} \textit{Let $\eta_i\in{_2\pi_{2^i}^S}$ denote
Mahowald's family. This class is detected by the Hurewicz
homomorphism
$$h:{_2\pi_6}Q_0S^{-2^i+6}\to H_6Q_0S^{-2^i+6}.$$
The spherical class $[\eta_i]_6=h\eta_i$ has the following property.
Let $j_2^\infty:QS^{2^i-3}\to Q\Sigma^{2^i-3}P_{2^i-3}$ be the
second stable James-Hopf invariant. We then have
$$(\Omega^{2^{i+1}-9}j_2^\infty)_*[\eta_i]_6=(\Sigma^{-2^i+6}a_{2^i-3})^2\neq 0$$
where $\Sigma^{-2^i+6}a_{2^i-3}\in H_3Q_0\Sigma^{-2^i+6}P_{2^i-3}$
is the class given by the inclusion of the bottom cell $S^3\to
Q_0\Sigma^{-2^i+6}P_{2^i-3}$.}\\

Here we use $[\eta_i]_6$ to denote this spherical class as we like
to remember that it is the class given by the mapping
$$\Omega_0^{2^{i+1}-8}S^{2^i-2}\to Q_0S^{-2^i+6}.$$
To complete the proof, we need a more general version of Lemma 6.
The result is as following.
\begin{lmm}
Suppose $f:S^{2m}\to\Omega X$ is given with $X$ having its bottom
call in dimension $m+1$. Then the adjoint mapping $S^{2m+1}\to X$ is
detected by $Sq^{m+1}$ on $\sigma_*x_m$ if and only if $hf=x_m^2\neq
0$ where $x_m\in H_*\Omega X$.
\end{lmm}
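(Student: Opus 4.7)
The plan is to reduce this unstable statement to the classical case $X = S^{m+1}$, where the equivalence follows from the James model for $\Omega S^{m+1}$ and the standard theory of Hopf invariants. Since $X$ is $m$-connected, the homology suspension $\sigma_*\colon H_m\Omega X\to H_{m+1}X$ is an isomorphism, so a class $x_m\in H_m\Omega X$ corresponds uniquely to $\sigma_*x_m\in H_{m+1}X$; moreover $X^{(m+1)}$ is a wedge of $(m+1)$-spheres, and $\sigma_*x_m$ is dual to one chosen summand $S^{m+1}$. My first step is to observe that both conditions in the equivalence depend only on this bottom-cell datum in the relevant range of dimensions, so I may collapse all other $(m+1)$-cells of $X$ and all cells of $X$ in dimensions strictly between $m+1$ and $2m+2$ without affecting either side.

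Concretely, this collapse gives a map $q\colon X\to S^{m+1}$ and a factorisation $\bar f = q\tilde f\colon S^{2m+1}\to S^{m+1}$. The mapping cone $C_{\tilde f}$ has a class $\beta\in H^{m+1}(C_{\tilde f})$ dual to $\sigma_*x_m$ and a generator $\gamma\in H^{2m+2}(C_{\tilde f})$, and the Steenrod relation $Sq^{m+1}\beta = \gamma$ involves only cells in dimensions $m+1$ and $2m+2$; hence it is equivalent to the analogous relation in $C_{\bar f} = S^{m+1}\cup_{\bar f}D^{2m+2}$. On the other hand, the looped collapse $\Omega q\colon \Omega X\to \Omega S^{m+1}$ is multiplicative in mod $2$ homology and sends $x_m$ to the canonical generator $x\in H_m\Omega S^{m+1}$, hence $x_m^2$ to $x^2$, while carrying $f$ to the loop adjoint $\bar f'\colon S^{2m}\to\Omega S^{m+1}$ of $\bar f$. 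So both conditions reduce to the corresponding conditions on $\bar f$.

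The case $X = S^{m+1}$ is classical. The James model gives $H_*\Omega S^{m+1} = \mathbb{Z}/2[x]$, so $h\bar f'\in H_{2m}\Omega S^{m+1}$ is either $0$ or $x^2$. The James--Hopf invariant $h_2\colon \Omega S^{m+1}\to\Omega S^{2m+1}$ sends $x^2$ to the generator of $H_{2m}\Omega S^{2m+1}$, and $h_2\circ\bar f'$ is the loop adjoint of the classical Hopf invariant $H(\bar f)\colon S^{2m+1}\to S^{2m+1}$, so $h\bar f' = x^2$ if and only if $H(\bar f)$ has odd degree; Steenrod's theorem then says $H(\bar f)$ has odd degree if and only if $\bar f$ is detected by $Sq^{m+1}$ on the fundamental class. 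This closes the equivalence for $S^{m+1}$, and hence for general $X$ by the reduction above. The main obstacle I anticipate is justifying the collapse step in full rigour: one must check that squaring in the Pontryagin algebra $H_*\Omega X$ is natural under $\Omega q$ (so that the loop quotient really does send $x_m^2$ to $x^2$) and that the $Sq^{m+1}$ relation in $C_{\tilde f}$ genuinely sees only the two extreme cell dimensions. Both should follow from the standard multiplicativity of loop-space maps on mod $2$ homology, together with a direct cellular analysis of $C_{\tilde f}$ in the range $[m+1,2m+2]$.
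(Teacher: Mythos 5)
There is a genuine gap at the very first step: the collapse map $q\colon X\to S^{m+1}$ that your whole reduction rests on does not exist in general. The cells of $X$ in dimensions strictly between $m+1$ and $2m+2$ (together with the unwanted $(m+1)$-cells) do not form a subcomplex, because their attaching maps may hit the chosen bottom cell; so you cannot quotient them away. What you are really asking for is a retraction of (a skeleton of) $X$ onto its bottom cell in the range up to $2m+1$, and this is obstructed precisely by Steenrod operations acting nontrivially on the bottom cohomology class. For example, if $X=S^{m+1}\cup_\eta e^{m+3}$ with $m\geqslant 2$, the relation $Sq^2\neq 0$ forbids any map $X\to S^{m+1}$ restricting to a degree-one map on the bottom cell. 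This is not a corner case one can wave away: in the paper the lemma is applied with $X$ a (desuspended) stunted projective space $\Sigma^{-2^i+7}P^{2^i}_{2^i-3}$, whose bottom class supports nontrivial Steenrod operations into exactly the forbidden intermediate range, so the reduction to $X=S^{m+1}$ fails for the intended application. (A secondary, smaller issue: even granting $q$, the implication from $h\bar f'=x^2$ back to $hf=x_m^2$ needs an argument, since $(\Omega q)_*$ can kill classes of $H_{2m}\Omega X$ not of the form $x_m^2$.)

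The paper's proof goes in the opposite direction and avoids this obstruction. Instead of pushing $X$ forward onto its bottom cell, it pulls the adjoint $S^{2m+1}\to X$ back to the $(2m+1)$-skeleton $X^{2m+1}$, observes via Freudenthal that a complex with bottom cell in dimension $m+1$ and top cell in dimension at most $2m+1$ desuspends, $X^{2m+1}\simeq\Sigma Y^{2m}$, and then applies the suspension-based Hopf-invariant criterion (Lemma 2.1) to $S^{2m}\to\Omega\Sigma Y^{2m}$. The remaining point, that the resulting square $y_m^2$ survives under $(\Omega i)_*\colon H_{2m}\Omega X^{2m+1}\to H_{2m}\Omega X$, is handled by comparing the homotopy and homology exact sequences of the pair $(\Omega X,\Omega X^{2m+1})$ and invoking homotopy excision. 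Your final paragraph on the James model and Steenrod's theorem is fine and is essentially the content of that criterion for $X=S^{m+1}$, but to repair the argument you need to replace the collapse by a skeletal restriction plus desuspension (or some equivalent device), since the quotient map you posit is not available.
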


We leave the proof of this lemma to another section.\\ \\
\textit{Proof of the Main Theorem .} We have already done a part of
the proof above. To complete the proof, notice that the composite
$$\xymatrix{S^7\ar[r]& QS^{-2^i+7}\ar[rr]^-{\Omega^{2^{i+1}-10}j_2^\infty}& & Q\Sigma^{-2^i+7}P^{2^i}_{2^i-3}}$$
is detected by $Sq^4$ on a $4$-dimensional homology class, say
$\Sigma^{-2^i+7}a_{2^i-3}\in H_4Q\Sigma^{-2^i+7}P_{2^i-3}$.
Moreover, we know that
$$\Sigma^{-2^i+7}a_{2^i-3}=\sigma_*\Sigma^{-2^i+6}a_{2^i-3}.$$
This then implies that adjointing down once, we have
$$\xymatrix{S^6\ar[r]^{\eta_i}& QS^{-2^i+6}\ar[rr]^-{\Omega^{2^{i+1}-9}j_2^\infty}& & Q\Sigma^{-2^i+6}P^{2^i}_{2^i-3}}.$$
Lemma 7 now implies that the above composite is detected by
$$(\Omega^{2^{i+1}-9}j_2^\infty\eta_i)_*g_3=(\Sigma^{-2^i+6}a_{2^i-3})^2\neq 0.$$
This completes the proof.$\phantom{MMMMMMMMMMMMMMMMM}\Box$\\

According to the above proof, we have some evidence that in the
homology algebra $H_*Q_0\Sigma^{-2^i+6}P_{2^i-3}$ there are some
classes with nontrivial square. However, this does not imply that
the subalgebra generated by such classes is a polynomial algebra
inside this homology algebra as one still has to eliminate possible
truncations. Despite this disappointment, we are still able to show
existence of some classes in their homology algebra. However, we do
not know about the algebraic structure of the subalgebra that they
span. To be more precisely, notice that
$$\sigma_*^{2^i-6}\Sigma^{-2^i+6}a_{2^i-3}=a_{2^i-3}+\textrm{ other terms.}$$
This implies that if we choose, $I$ such that $\ex(I)\geqslant
2^i-3$ then $Q^I\Sigma^{-2^i+6}a_{2^i-3}\neq 0$. This comes easy
from the fact that
$$\sigma_*^{2^i-6}Q^I\Sigma^{-2^i+6}a_{2^i-3}=Q^Ia_{2^i-3}.$$
Hence, we may consider the subalgebra spanned by such elements.

\section{Relations to spherical classes homology of $Q_0S^0$}
The type of spherical classes in $H_*Q_0S^0$ are predicted by a
conjecture due to Curtis \cite[Thoerem 7.1]{7}. This predicts that
only the Hopf invairnat one elements
$\theta_i\in{_2\pi_{2^{i+1}-2}}Q_0S^0$ and the classical Hopf
invariant one elements in ${_2\pi_{2^i-1}Q_0S^0}$ map nontrivially
under the Hurewicz homomorphism
$$h:{_2\pi_*}Q_0S^0\to H_*Q_0S^0.$$
In fact the conjecture predicts that if $\alpha\in{_2\pi_*^S}$ has
Adams filtration at least $3$ then its stable adjoint viewed as an
element of ${_2\pi_*}Q_0S^0$ maps trivially under the Hurewicz
homomorphism. Notice that apart from the Hopf invariant one and
Kervaire invariant one elements we are left with the $\eta_i$ family
which we have dealt with in this paper.\\
Now, we can ask two related questions. First, notice that having
$\alpha\in{_2\pi_*}Q_0S^0$ we may consider to adjoint of alpha as
elements of ${_2\pi_{*-k}}Q_0S^{-k}$ under the suspension
isomorphism
$${_2\pi_{*-k}^S}S^{-k}\simeq{_2\pi_{*-k}}Q_0S^{-k}\to {_2\pi_*}Q_0S^0\simeq{_2\pi_*^S}$$
where $Q_0S^{-k}$ is the base point component of $\Omega^kQ_0S^0$.
We then may ask what is the least $k$ where the adjoint of $\alpha$
maps nontrivially under the Hurewics homomorphism
$${_2\pi_{*-k}}Q_0S^{-k}\to H_*Q_0S^{-k}.$$

Second, we may ask assuming that the Curtis's conjecture fails, how
we can calculate the Hurewicz image of those elements of which their
Adams filtration is at least $3$.\\

It seems to us that the answers to these questions are very much
related, and this work provides us with an example. This suggest
that an £$EHP$-approach is the right approach to deal with these
questions. We postpone more results and calculation on this to a
further work.

\section{Proof of Lemma 4.1}

Here we like to give a proof of Lemma 4.1. The following
observation, which is a corollary of the Freudenthal's suspension
theorem, will be used in the proof of lemma. 
\begin{mlmm}
Let $X_n^i$ denote a cell complex with bottom cell at dimension $n$
and top cell at dimension $i$. If $i<2n$ then $X_n^i$ admits at
least one desuspension, i.e.
$$X_n^i\simeq\Sigma Y_{n-1}^{i-1}.$$
\end{mlmm}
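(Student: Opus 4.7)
\textit{Proof proposal.} The plan is to induct on the skeletal filtration of $X_n^i$, desuspending one attaching map at a time by invoking Freudenthal's suspension theorem. The base case is essentially formal: the bottom skeleton is a wedge of $n$-spheres, and each $S^n = \Sigma S^{n-1}$, so the bottom stage is already a suspension of a complex whose cells sit in dimension $n-1$.

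For the inductive step, suppose that some skeleton $X^{(k-1)}$ with $n < k \leqslant i$ has already been shown to be homotopy equivalent to $\Sigma Y$, where $Y$ is a CW complex whose cells lie in dimensions $n-1$ through $k-2$; in particular $Y$ is $(n-2)$-connected. A $k$-cell is attached to $X^{(k-1)}$ by some $\varphi : S^{k-1} \to \Sigma Y$. To desuspend this cell I need to lift $\varphi$, up to homotopy, along the suspension homomorphism
$$\Sigma : \pi_{k-2}(Y) \longrightarrow \pi_{k-1}(\Sigma Y).$$
By Freudenthal applied to the $(n-2)$-connected complex $Y$, this map is surjective whenever $k-2 \leqslant 2(n-2)+1$, i.e.\ $k \leqslant 2n-1$. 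Since every cell of $X_n^i$ has dimension at most $i \leqslant 2n-1$, the lift $\psi : S^{k-2} \to Y$ with $\Sigma \psi \simeq \varphi$ exists, and then
$$X^{(k-1)} \cup_{\varphi} D^k \;\simeq\; \Sigma Y \cup_{\Sigma \psi} D^k \;\simeq\; \Sigma\bigl(Y \cup_{\psi} D^{k-1}\bigr),$$
producing the next stage of the desuspension. Iterating until the top cell of $X_n^i$ is consumed gives the desired $Y = Y_{n-1}^{i-1}$.

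The main obstacle is the bookkeeping of the Freudenthal bound: the numerical hypothesis $i < 2n$ is precisely what forces \emph{surjectivity} (one does not need bijectivity) of the suspension map on homotopy groups at every stage, so that each attaching map admits a genuine desuspension. A secondary technicality is that, when several cells of the same dimension are present, they should be attached in parallel; but the Freudenthal range depends only on the connectivity of $Y$, which remains $n-2$ throughout the induction, so attaching them one at a time causes no trouble.
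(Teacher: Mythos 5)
Your proposal is correct and follows essentially the same route as the paper: induct over the cells, note the bottom skeleton is a wedge of $n$-spheres and hence desuspends, and use Freudenthal's suspension theorem (surjectivity of $\Sigma:\pi_{k-2}(Y)\to\pi_{k-1}(\Sigma Y)$ for the $(n-2)$-connected complex $Y$ in the range $k\leqslant 2n-1$ forced by $i<2n$) to desuspend each attaching map, so that $\Sigma Y\cup_{\Sigma\psi}D^k\simeq\Sigma(Y\cup_\psi D^{k-1})$. Your bookkeeping of the Freudenthal range and the remark about cells of equal dimension are slightly more explicit than the paper's version, but the argument is the same.
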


\begin{proof}
The proof is based on induction. If $i=n$, then $X_n^i$ is a wedge
of spheres and hence desuspends. Assume that the statement is true
for $X_n^i$, and we prove it for $X_n^{i+1}$. Let $f:S^i\to
X_n^i\simeq\Sigma Y_{n-1}^{i-1}$ denotes attaching map of an
$(i+1)$-cell. Observe that $f\in\pi_i\Sigma Y_{n-1}^{i-1}$ with
$i<2n$. According to the suspension theorem, $f$ desuspends to
$\pi_{i-1}Y_{n-1}^{i-1}$. The fact that $f$ desuspends implies that
the cofibre of $X_n^i\cup_fe^{i+1}$ also desuspends. Finally the
fact that $X_n^{i+1}$ is obtained by attaching some
$(i+1)$-dimensional cells through a map from a wedge of spheres to
$X_n^i$ shows that $X_n^{i+1}$ also admits a desuspension. This
completes the proof.
\end{proof}

Now we proceed with the proof of Lemma 8.\\

First, let $f:S^{2m+1}\to X$ be given such that $X$ has its bottom
cell at dimension $m+1$. Assume that $f$ is detected by $Sq^{m+1}$
on $x_{m+1}\in H_{m+1}X$. We like to show that the adjoint of $f$,
say $g:S^{2m}\to\Omega X$ is detected in homology by $hg=y_m^2\neq
0$ with $y_m\in H_m\Omega X$ such that $\sigma_*y_m=x_{m+1}$.\\

Notice that $f$ pulls back to the $(2m+1)$-skeleton of $X$, i.e. it
is in the image of
$$i_{\#}:\pi_{2m+1}X^{2m}\to \pi_{2m+1}X$$
where $i:X^{2m+1}\to X$ denotes the inclusion. We may apply Lemma 8
to $X^{2m+1}$ to observe that there exists a homotopy equivalence
$$X^{2m+1}\stackrel{\simeq}{\longrightarrow}\Sigma Y^{2m}$$
where $Y^{2m}$ has its bottom cell at dimension $m$ and top cell at
dimension $2m$. Now we may adjoint $f$ to obtain a mapping
$g:S^{2m}\to\Omega X$ where according to the above observation it
pulls back to a map $S^{2m}\to\Omega X^{2m+1}\simeq\Omega\Sigma
Y^{2m}$, i.e. we have the following commutative diagram
$$\xymatrix{ S^{2m}\ar[r]^-g\ar[rd]_-{g'} & \Omega X\\
                                    & \Omega X^{2m+1}\ar[u]_-{\Omega i}\ar[r]^-{\simeq}&\Omega\Sigma
                                    Y^{2m}.}$$
If we assume that $f$ is detected by $Sq^{m+1}$ on $x_{m+1}\in
H_{m+1}X$, this then also implies that the pull back of $f$ to
$X^{2m+1}$ is also detected by $Sq^{m+1}$ on $x_{m+1}=\Sigma y_{m}$
where $y_{2m}\in H_{m}Y^{2m}$. Lemma 6 then implies that the mapping
$$g':S^{2m}\to\Omega X^{2m+1}\simeq\Omega\Sigma Y^{2m}$$
is detected by homology, i.e. $hg'=y_m^2$ where we have used $y_m$
to denote the preimage of $y_m$ under the isomorphism $H_m\Omega
X^{2m+1}\to H_mY^{2m}$. The class $y_m$ has the
property that $\sigma_*y_m=x_{m+1}$.\\
To complete the proof, we need to show that $hg=(\Omega
i)_*y_m^2\neq 0$. This is straightforward once we consider the pair
$(\Omega X,\Omega X^{2m+1})$ and the following commutative diagram
with exact rows
$$\xymatrix{
\cdots\ar[r] &\pi_{2m+1}(\Omega X,\Omega X^{2m+1})\ar[r]^-\partial\ar[d]_-{h (\simeq)} & \pi_{2m}\Omega X^{2m+1}\ar[r]^-{(\Omega i)_{\#}}\ar[d]_-h & \pi_{2m}\Omega X\ar[r]\ar[d]_-h &\cdots\\
\cdots\ar[r] &  H_{2m+1}(\Omega X,\Omega X^{2m+1})\ar[r]^-\partial                     & H_{2m}\Omega X^{2m+1}\ar[r]^-{(\Omega i)_*}            & H_{2m}\Omega X\ar[r] &\cdots.\\
}$$ If we assume that $(\Omega i)_*hg'=(\Omega i)_*y_m^2=0$, then
$y_m^2$ pulls back to $H_{2m+1}(\Omega X,\Omega X^{2m+1})$. One may
use homotopy excision property to show that$$H_{2m+1}(\Omega
X,\Omega X^{2m+1})\simeq \pi_{2m+1}(\Omega X,\Omega X^{2m+1}),$$
i.e. $g'$ belongs to the image of $\partial:\pi_{2m+1}(\Omega X,\Omega
X^{2m+1})\to\pi_{2m}\Omega X^{2m+1}$. This then implies that
$(\Omega i)_{\#}g'=0$. However, we know that $0\neq g=(\Omega
i)_{\#} g$. This gives a contradiction to the assumption that
$(\Omega i)_*y_m^2=0$. Hence, $(\Omega i)_*y_m^2\neq 0$ and the
proof is
complete.\\
The proof of Lemma 7 in the other direction is done in a similar
way, i.e by a combination of Lemma 8 and Lemma 6, and we leave it to
the reader.

\section{One application and a conjecture}

Consider the case when $i=3$. In this case case we obtain spherical
classes $[\eta_3]_6\in H_6Q_0S^{-2}$ corresponding to $\eta_3$. A
quick observation is that this class dies under the homology
suspension $\sigma_*:H_*Q_0S^{-2}\to H_*Q_0S^{-1}$, and hence the
subalgebra of $H_*Q_0S^{-2}$ generated by $Q^I[\eta_3]_6$ belongs to
$\ker\sigma_*$. This is easy to see from the following fact.
\begin{lmm}
A spherical class $\xi_{-1}\in H_*Q_0S^{-1}$ survives under the
homology suspension $\sigma_*:H_*Q_0S^{-1}\to H_*Q_0S^0$.
\end{lmm}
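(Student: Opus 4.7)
The plan is to reformulate the statement via the adjunction $Q_0S^{-1}\simeq\Omega_0 Q_0S^0$ and then analyse the homology suspension using the Hopf-algebra structure on $H_*Q_0S^{-1}$.

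First I would write $\xi_{-1}=h(f)$ for some based map $f\colon S^n\to Q_0S^{-1}$. Under the adjunction $f$ corresponds to a map $\tilde f\colon S^{n+1}\to Q_0S^0$, and by the definition of the homology suspension $\sigma_*$ via the evaluation $\Sigma\Omega Y\to Y$ one has the standard compatibility $\sigma_*h(f)=h(\tilde f)$. Hence the assertion of the lemma is equivalent to $h(\tilde f)\ne 0$ in $H_{n+1}Q_0S^0$.

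I would then argue by contradiction. Assume $\sigma_*\xi_{-1}=0$. The kernel of the homology suspension $\sigma_*\colon H_*\Omega_0 Q_0S^0\to H_{*+1}Q_0S^0$ always contains the decomposables of the Pontryagin algebra, and a refinement using the bar spectral sequence of the path-loop fibration $\Omega_0 Q_0S^0\to P Q_0S^0\to Q_0S^0$ places $\xi_{-1}$ among the decomposables. But spherical classes are primitive in the coalgebra structure, and by the Frobenius version of the Milnor--Moore theorem a primitive decomposable in an $\mathbb{F}_2$-connected Hopf algebra is a sum of squares, so one would have $\xi_{-1}=z^2$ with $z$ primitive. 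The Cartan formula gives $Sq^r_*(z^2)=(Sq^{r/2}_*z)^2$ for even $r$ and vanishes for odd $r$, so the $A$-annihilation of $\xi_{-1}$ forces $z$ itself to be $A$-annihilated.

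The main obstacle is ruling out this residual case $\xi_{-1}=z^2$ with $z$ a primitive $A$-annihilated class of half the dimension. One cannot simply invoke sphericity of $z$, since Curtis-type restrictions on which primitive $A$-annihilated classes are spherical are nontrivial; instead one would use the explicit description of $H_*Q_0S^{-1}$ from \cite[Theorem 1.1]{3} to check case by case that no such $z$ produces a spherical $z^2$ in $\ker\sigma_*$. As a sanity check, and as the base case for any inductive argument, the spherical class $w'_2\in H_2Q_0S^{-1}$ from the $\widetilde\nu_j$ table of Section~3 satisfies $\sigma_*w'_2=p'_3=x_3+Q^2x_1+D\ne 0$, directly confirming the lemma in the lowest nontrivial degree and indicating how the Dyer--Lashof action on $w'_2$ propagates nontriviality of $\sigma_*$ to higher-degree spherical classes.
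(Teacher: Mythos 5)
Your proposal has the same skeleton as the paper's proof (spherical $\Rightarrow$ primitive; dies under $\sigma_*$ $\Rightarrow$ decomposable; primitive decomposable over $\Z/2$ $\Rightarrow$ square), but both load-bearing steps are left unestablished, and both are supplied in the paper by a single citation, namely \cite[Theorem 1.1]{3}. First, your claim that the bar spectral sequence of the path--loop fibration ``places $\xi_{-1}$ among the decomposables'' is not a formal fact: for a general loop space $\Omega Y$ the kernel of $\sigma_*\colon H_*\Omega Y\to H_{*+1}Y$ contains the decomposables but is usually strictly larger, i.e.\ the induced map $QH_*\Omega Y\to PH_{*+1}Y$ need not be injective. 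What makes the argument work here is the computed result of Cohen--Peterson that $\sigma_*\colon QH_*Q_0S^{-1}\to PH_*Q_0S^0$ is an isomorphism; without invoking that, the step ``$\sigma_*\xi_{-1}=0$ implies $\xi_{-1}$ is decomposable'' is unjustified.

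Second, your ``residual case'' $\xi_{-1}=z^2$ does not require a case-by-case analysis and should not be left as one: the same theorem states that $H_*Q_0S^{-1}$ is an exterior algebra, $H_*Q_0S^{-1}\simeq E_{\Z/2}(\sigma_*^{-1}PH_*Q_0S^0)$, so every square vanishes and $\xi_{-1}=z^2$ forces $\xi_{-1}=0$, contradicting nontriviality of the spherical class. This is the paper's one-line conclusion. As written, your proof defers exactly the two facts that constitute the proof; once you quote \cite[Theorem 1.1]{3} for both the isomorphism on indecomposables and the exterior algebra structure, your argument collapses onto the paper's and the adjunction/bar-spectral-sequence framing and the Cartan-formula discussion of $Sq^r_*z^2$ become unnecessary.
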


\begin{proof}
Recall from \cite[Theorem 1.1]{3} that the homology suspension
$\sigma_*:QH_*Q_0S^{-1}\to PH_*Q_0S^0$ is an isomorphism where $Q$
is the indecomposable quotient module functor, and $P$ is the
primitive submodule functor. Moreover, the homology algebra
$H_*Q_0S^{-1}$ is an exterior given by
$$H_*Q_0S^{-1}\simeq E_{\Z/2}(\sigma^{-1}_*PH_*Q_0S^0).$$
Notice that a spherical class is primitive. This implies that a
spherical class in $H_*Q_0S^{-1}$ cannot be a decomposable, as if
this happens this it must be a square which is trivial in the
exterior algebra. Hence, a given spherical class $\xi_{-1}\in
H_*Q_0S^{-1}$ does not die under the suspension. This proves the
lemma.
\end{proof}

Now assuming that $\sigma_*[\eta_3]_6\neq 0$ would imply that
$\eta_i$ gives a spherical class in $H_*Q_0S^{-1}$ and hence to a
spherical class in $H_*Q_0S^0$. But this is a contradiction, as we
observed at the beginning of the paper that $\eta_i$ does not give
rise to a spherical class in $H_*Q_0S^0$. Hence,
$\sigma_*[\eta_3]_6=0$. In particular this detects a part of
$H_*Q_0S^{-2}$ which does not come from pull back of any class in
$H_*Q_0S^{-1}$. We note that the existing literature on the
calculation of $H_*Q_0S^{-2}$ has not detected this bit. This
motivates the following conjecture.
\begin{mconj}
the class $[\eta_i]_6\in H_6Q_0S^{-2^i+6}$ dies under the homology
suspension $\sigma_*:H_*Q_0S^{-2^i+6}\to H_{*+1}Q_0S^{-2^i+7}$.
Consequently, the subalgebra of $H_*Q_0S^{-2^i+6}$ generated by
$Q^I[\eta_i]_6$ belong to $\ker\sigma_*$.
\end{mconj}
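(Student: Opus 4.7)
The plan is to extend the argument used for $i=3$ in Section 7 to general $i$. The starting point is a naturality computation: since $j_2^\infty$ is an infinite loop map, $\Omega^{2^{i+1}-9} j_2^\infty$ commutes with the homology suspension $\sigma_*$, and since $\sigma_*$ annihilates decomposables, the Main Theorem yields
$$(\Omega^{2^{i+1}-10} j_2^\infty)_* \sigma_* [\eta_i]_6 = \sigma_* (\Omega^{2^{i+1}-9} j_2^\infty)_* [\eta_i]_6 = \sigma_*\bigl((\Sigma^{-2^i+6} a_{2^i-3})^2\bigr) = 0.$$
So $\sigma_* [\eta_i]_6$ lies in the kernel of the James-Hopf map one level down. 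This is consistent with the conjecture, but does not by itself force vanishing, since the relevant James-Hopf loop map is not known to be injective on primitives in this degree.

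To deduce vanishing, I would argue by contradiction via iterated homology suspension, following the template of Section 7. If $\sigma_* [\eta_i]_6 \neq 0$ in $H_7 Q_0 S^{-2^i+7}$, it is a primitive spherical class, since homology suspension preserves both properties on classes coming from $\pi_*$. Iterating $\sigma_*$ produces nonzero classes in $H_{6+k} Q_0 S^{-2^i+6+k}$ for $k = 1, 2, \ldots, 2^i - 6$, and the terminal class lies in $H_{2^i} Q_0 S^0$, where it represents the Hurewicz image of $\eta_i$ in the infinite loop space $Q_0 S^0$. The opening argument of the paper shows this image vanishes, so to reach a contradiction one must verify that none of the intermediate suspensions kills the class.

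The main obstacle is precisely this generalisation of Lemma 6.1 from $k=1$ to every $k$ in the range $1 \leq k \leq 2^i - 7$. For $k=1$ the exterior-algebra description $H_* Q_0 S^{-1} \cong E_{\Z/2}(\sigma_*^{-1} P H_* Q_0 S^0)$ from reference [3] forces a primitive spherical class to be indecomposable, and hence to survive under $\sigma_*$. For $k \geq 2$ the ring structure is much less transparent, and only partial information is cited in references [100] and [101]. A natural strategy is induction on $k$, using the relative pair $(\Omega Q_0 S^{-k+1}, Q_0 S^{-k})$ or a suitable Eilenberg--Moore spectral sequence to control the image of primitives under $\sigma_*$ at each stage, together with the observation that squares and other decomposables in the target cannot receive a primitive class. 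An alternative, suggested by the remarks in Section 5, is an $EHP$-theoretic approach: track successive stable James-Hopf invariants of $\eta_i$ directly through the sequence $QP_n$ and show that at every level past the one captured by the Main Theorem, the spherical Hurewicz obstruction is detected by a square and so dies under one further suspension. Once $\sigma_* [\eta_i]_6 = 0$ is established, the subalgebra statement is formal: $\sigma_*$ kills all decomposables and commutes with Dyer--Lashof operations in the admissible range, so $\sigma_* Q^I [\eta_i]_6 = Q^I \sigma_* [\eta_i]_6 = 0$ for every admissible $I$, and the whole subalgebra generated by the $Q^I [\eta_i]_6$ lies in $\ker \sigma_*$.
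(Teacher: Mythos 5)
This statement is labelled as a \emph{conjecture} in the paper: the authors offer no proof for general $i$, and Section 7 establishes it only for $i=3$, where the target of the suspension is $Q_0S^{-1}$ and Lemma 6.1 (spherical classes in $H_*Q_0S^{-1}$ survive $\sigma_*$, because $H_*Q_0S^{-1}$ is an exterior algebra by the Cohen--Peterson computation, so a primitive cannot be decomposable) turns the vanishing of the Hurewicz image of $\eta_3$ in $H_*Q_0S^0$ into a contradiction. Your proposal correctly reproduces exactly this mechanism and correctly extends the formal parts: the naturality computation showing $\sigma_*[\eta_i]_6\in\ker(\Omega^{2^{i+1}-10}j_2^\infty)_*$ is sound (loop maps commute with $\sigma_*$ and $\sigma_*$ kills squares), and the reduction of the subalgebra statement to the single class $[\eta_i]_6$ via $\sigma_*Q^I=Q^I\sigma_*$ and the vanishing of $\sigma_*$ on decomposables is exactly right.

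However, the proposal is not a proof, and the gap you yourself flag is the whole content of the conjecture. Your contradiction scheme needs every intermediate suspension $H_{6+k}Q_0S^{-2^i+6+k}\to H_{7+k}Q_0S^{-2^i+7+k}$, for $k=1,\dots,2^i-7$, to be injective on the relevant primitive spherical classes; only the final step into $H_*Q_0S^0$ is covered by Lemma 6.1, and for $k\geqslant 2$ no analogue is available --- indeed it is logically consistent with everything proved in the paper that $\sigma_*[\eta_i]_6\neq 0$ and the class dies at some later stage, which would produce no contradiction at all. The strategies you sketch (an Eilenberg--Moore or relative-pair induction on $k$, or an $EHP$-style tracking of James--Hopf invariants through $Q\Sigma^{-n}P_m$) are plausible directions but are not carried out, and the paper gives no indication that they succeed; the authors explicitly state that very little is known about $H_*Q_0S^{-k}$ for $k\geqslant 3$. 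So you have correctly reduced the conjecture to an unproven survival statement for spherical classes in negatively graded $Q$-spheres, which is useful as a diagnosis but leaves the statement exactly as open as the paper leaves it.
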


\end{document}